\newcommand{\arXiv}[1]{\href{http://arxiv.org/abs/#1}{\texttt{arXiv:#1}}}
\newcommand{\Hal}[1]{\href{https://hal.archives-ouvertes.fr/#1}{\texttt{HAL:#1}}}
\newcommand{\cH}{{\mathcal H}}
\def\rd{\mathrm{d}}
\def\ri{\mathrm{i}}
\def\re{\mathrm{e}}
\newcommand{\R}{{\mathbb R}}
\newcommand{\C}{{\mathbb C}}
\newtheorem{theorem}{Theorem}[section]
\newtheorem{lemma}[theorem]{Lemma}
\theoremstyle{definition}
\theoremstyle{remark}
\newtheorem{remark}[theorem]{Remark}
\numberwithin{equation}{section}
\newcommand{\bi}{\begin{itemize}}
\newcommand{\ei}{\end{itemize}}
\newcommand{\bd}{\begin{description}}
\newcommand{\ed}{\end{description}}
\newcommand{\be}{\begin{enumerate}}
\newcommand{\ee}{\end{enumerate}}
\newcommand{\dis}{\displaystyle}
\def\bc{\begin{center}}
\def\ec{\end{center}}
\newcommand{\tand}{\text{and}}
\newcommand{\lsum}{\sum\limits}
\newcommand{\lint}{\int\limits}
\def\z{\zeta}
\def\eps{\epsilon}
\def\l{\left}
\def\r{\right}
\def\bl{\bigl}
\def\br{\bigr}
\begin{document}
\setcounter{page}{40}


\title[Integral representations and asymptotic behaviour]{Integral representations and asymptotic behaviour of a Mittag-Leffler type function of two variables}

\author[C. Lavault]{Christian Lavault}
\address{LIPN, CNRS UMR 7030, Universit\'e Paris 13, Sorbonne Paris Cit\'e, F-93430 Villetaneuse, France.}
\email{\textcolor[rgb]{0.00,0.00,0.84}{lavault@lipn.univ-paris13.fr}}

\dedicatory{{\rm Communicated by C. Lizama}}

\let\thefootnote\relax\footnote{Copyright 2016 by the Tusi Mathematical Research Group.}

\subjclass[2010]{Primary 32A25; Secondary 45P05, 32D99.}

\keywords{Generalized two-parametric Mittag-Leffler type functions of two variables; Integral representations; Special functions; Hankel's integral contour; Asymptotic expansion formulas.}

\date{Received: May 26, 2017; Accepted: Oct. 18, 2017.}

\begin{abstract}
Integral representations play a prominent role in the analysis of entire functions. The representations of generalized Mittag-Leffler type functions and their asymptotics have been (and still are) investigated by plenty of authors in various conditions and cases.

The present paper explores the integral representations of a special function extending to two variables the two-parametric Mittag-Leffler type function. Integral representations of this functions within different variation ranges of its arguments for certain values of the parameters are thus obtained. Asymptotic expansion formulas and asymptotic properties of this function are also established for large values of the variables. This yields  corresponding theorems providing integral representations as well as expansion formulas.
\end{abstract}
\maketitle

\section{Definition and notation} \label{def}
Let the power series
\[
E_{\alpha,\beta}(z) := \lsum_{n=0}^\infty \frac{z^n}{\Gamma(\alpha n + \beta)}\ \qquad (\alpha, \beta\in \C;\ \Re(\alpha) > 0)\]
define the two-parametric Mittag-Leffler function (or M-L function for short)~\cite{Wiman05}. For the first parameter $\alpha$ with positive real part and any non restricted complex value of the second parameter $\beta$, the function $E_{\alpha,\beta}(z)$ is an entire functions of $z\in \C$ of order $\rho = 1/\Re(\alpha)$ and type $\sigma = 1$ (see, e.g, \cite[Chap.~4]{GoKiMaRo14}, \cite{HumAgr53}, \cite[\S1.1]{Lavault17}).

From here on, since we are concerned with integral representations and asymptotic expansions of generalized two-parametric M-L type functions, we shall restrict our attention to positive real-valued parameters $\alpha$ and $\beta$. Besides, the function $E_{\alpha,\beta}(z)$ of one variable $z\in \C$ will also be denoted for simplicity by $E_\alpha(z; \beta)$ in the proof of Lemma~\ref{lem1}, according to the notation used by D\v{z}rba\v{s}jan in~\cite{Djrba60, Djrba66}.

Thus, the two-parametric M-L function of one variable $z\in \C$\ extends to the generalized M-L type function $E_{\alpha,\beta}(x,y; \mu)$ of {\em two variables} $x, y\in \C$~\cite{Djrba60, Djrba66}. Provided that $\alpha, \beta > 0$, it is also an entire function defined by the double power series~\cite{GaMaKa13, OgorYasha10}
\begin{equation} \label{def_ml}
E_{\alpha,\beta}(x,y; \mu) := \lsum_{n,m=0}^\infty \frac{x^n y^m}{\Gamma(n\alpha + m\beta + \mu)}\ %
\qquad (\alpha, \beta\in \R,\, \alpha, \beta > 0,\ \mu\in \C),
\end{equation}
in which the arbitrary parameter $\mu$ takes in general complex values.

Following~\cite{Djrba60, Djrba66} (see also, e.g., \cite{GoKiMaRo14, GorLouLuc02, HaMaSa11, Temme96}, \cite[\S1.2~\&~ App.~A, C~\&~D]{Lavault17}, and references therein), $E_{\alpha,\beta}(x,y; \mu)$ can be written in terms of {\em Hankel's integral representations} depending on the variation ranges of the arguments, thereby as special cases of the Fox $H$-function. The {\em Hankel path} considered further in Lemma~\ref{lem1} to~\ref{lem3} and in Theorem~\ref{main} is denoted by $\gamma(\eps;\eta) := \bl\{0 < \eta\le \pi,\ \eps > 0\br\}$, defining a contour integral oriented by non-decreasing $\arg \z$. It consists of the following two parts depicted for instance in~\cite[Fig.~1 to~4]{GorLouLuc02}):
\be
\item the two rays $S_{\eta} =  \bl\{\arg \z = \eta,\ |\z|\ge \eps\br\}$ and $S_{-\eta} = %
\bl\{\arg \z = -\eta,\ |\z|\ge \eps\br\}$;
\item the circular arc $C_\eta(0;\eps) =  \bl\{|\z| = \eps,\ -\eta\le \arg \z\le \eta\br\}$.
\ee
If $0 < \eta < \pi$, then the contour $\gamma(\eps;\eta)$ divides the complex $\z$-plane into two unbounded regions, namely $\Omega^{(-)}(\eps;\eta)$ to the left of $\gamma(\eps ;\eta)$ by orientation and $\Omega^{(+)}(\eps;\eta)$ to the right of the contour. If $\eta = \pi$, then the contour consists of the circle $\br\{|\z| = \eps\bl\}$ and of the ray $-\infty < \z\le -\eps$ ($\eps > 0$), which is a two-way path (one in each direction) along the real line. More precisely, this {\em  keyhole} or Hankel contour is a path from $-\infty$ inbound along the real line to $-\eps < 0$, counterclockwise around a circle of radius $\epsilon$ at 0, back to $-\eps$ on the real line, and outbound back to $-\infty$ along the real line.

\section{Integral representations} \label{rep}
This section provides a few lemmas, which show various integral representations of the generalized M-L type function~\eqref{def_ml} corresponding to different variation ranges of the two arguments.

\begin{lemma} \label{lem1}
Let $0 < \alpha, \beta < 2$\ and $\alpha\beta < 2$. Let $\mu$ be any complex number and let $\eta$ satisfy the condition
\begin{equation} \label{cond1}
\pi \alpha\beta/2 < \eta\le \min\bl(\pi, \pi\alpha \beta\br).
\end{equation}

If $x\in \Omega^{(-)}(\eps_\alpha;\eta_\alpha)$\ and $y\in \Omega^{(-)}(\eps_\beta;\eta_\beta)$, where $\eps_\alpha := \eps^{1/\beta}$, $\eps_\beta := \eps^{1/\alpha}$ and $\eta_\alpha := \eta/\beta$, $\eta_\beta := \eta/\alpha$, then the Hankel integral representation holds
\begin{equation} \label{mlrep1}
E_{\alpha,\beta}(x,y;\mu) = \frac{1}{2\pi\ri} \frac{1}{\alpha\beta} \lint_{\gamma(\eps;\eta)} %
\frac{\re^{\z^{1/(\alpha \beta)}} \z^{\frac{\alpha + \beta + 1 - \mu}{\alpha \beta} - 1}} %
{(\z^{1/\alpha} - y)(\z^{1/\beta} - x)}\, \rd \z.
\end{equation}
\end{lemma}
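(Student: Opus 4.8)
The plan is to derive \eqref{mlrep1} as the two-variable analogue of D\v{z}rba\v{s}jan's classical one-variable Hankel representation of $E_\alpha(z;\mu)$, by a three-step scheme: substitute Hankel's integral for the reciprocal Gamma function into the defining double series \eqref{def_ml}; interchange summation and integration and sum the resulting double geometric series; carry out a power change of variable that turns the auxiliary contour into $\gamma(\eps;\eta)$. A concluding analytic-continuation argument then widens the polydisc on which these steps are valid to the full product of regions $\Omega^{(-)}$.

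First I would fix an auxiliary Hankel path $\gamma(\delta;\theta)$ with $\delta:=\eps^{1/(\alpha\beta)}$ and $\pi/2<\theta\le\min\bl(\pi,\pi/(\alpha\beta)\br)$ --- a non-empty range precisely because $\alpha\beta<2$ --- and use, for all integers $n,m\ge0$,
\[
\frac{1}{\Gamma(n\alpha+m\beta+\mu)}=\frac{1}{2\pi\ri}\lint_{\gamma(\delta;\theta)}\re^{s}\,s^{-(n\alpha+m\beta+\mu)}\,\rd s ,
\]
the integral converging since $\Re(s)<0$ along the two rays (as $\theta>\pi/2$). For $x$ in the disc $|x|<\delta^{\alpha}=\eps^{1/\beta}$ and $y$ in the disc $|y|<\delta^{\beta}=\eps^{1/\alpha}$ one has $\sup_{s\in\gamma(\delta;\theta)}|x\,s^{-\alpha}|<1$ and $\sup_{s\in\gamma(\delta;\theta)}|y\,s^{-\beta}|<1$, both suprema being attained on the arc $|s|=\delta$; hence the double geometric series
\[
\lsum_{n,m\ge0}(x\,s^{-\alpha})^{n}(y\,s^{-\beta})^{m}=\bl[(1-x\,s^{-\alpha})(1-y\,s^{-\beta})\br]^{-1}
\]
converges uniformly on $\gamma(\delta;\theta)$ and, being dominated there by a constant multiple of the integrable function $|\re^{s}s^{-\mu}|$, may be integrated term by term. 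This gives
\[
E_{\alpha,\beta}(x,y;\mu)=\frac{1}{2\pi\ri}\lint_{\gamma(\delta;\theta)}\frac{\re^{s}\,s^{\alpha+\beta-\mu}}{(s^{\alpha}-x)(s^{\beta}-y)}\,\rd s .
\]

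Next I would substitute $s=\z^{1/(\alpha\beta)}$ (principal branch). The holomorphic map $\z\mapsto\z^{1/(\alpha\beta)}$ carries $\gamma(\eps;\eta)$ --- with $\eps=\delta^{\alpha\beta}$ and $\eta=\alpha\beta\,\theta$ --- onto $\gamma(\delta;\theta)$ with matching orientation, and the restriction $\pi/2<\theta\le\min\bl(\pi,\pi/(\alpha\beta)\br)$ is exactly condition \eqref{cond1} rewritten; moreover $|\arg\z|\le\eta\le\pi$ on $\gamma(\eps;\eta)$, so the powers $\z^{1/(\alpha\beta)}=s$, $\z^{1/\alpha}=s^{\beta}$, $\z^{1/\beta}=s^{\alpha}$ are single-valued there. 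Collecting $\rd s=\frac{1}{\alpha\beta}\z^{1/(\alpha\beta)-1}\,\rd\z$ and the exponent $\frac{\alpha+\beta-\mu}{\alpha\beta}+\frac{1}{\alpha\beta}-1=\frac{\alpha+\beta+1-\mu}{\alpha\beta}-1$ converts the last display into the right-hand side of \eqref{mlrep1}, so \eqref{mlrep1} holds for $(x,y)$ in the polydisc $|x|<\eps_\alpha$, $|y|<\eps_\beta$. To finish, observe that the right-hand side of \eqref{mlrep1} is holomorphic in $(x,y)$ as long as $x$ stays off $\{\z^{1/\beta}:\z\in\gamma(\eps;\eta)\}=\gamma(\eps_\alpha;\eta_\alpha)$ and $y$ stays off $\{\z^{1/\alpha}:\z\in\gamma(\eps;\eta)\}=\gamma(\eps_\beta;\eta_\beta)$ --- the only loci at which the integrand acquires a pole --- hence it is holomorphic on the connected open set $\Omega^{(-)}(\eps_\alpha;\eta_\alpha)\times\Omega^{(-)}(\eps_\beta;\eta_\beta)$, which contains the polydisc just treated. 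Since $E_{\alpha,\beta}(\cdot,\cdot;\mu)$ is entire, the identity theorem for holomorphic functions of two variables propagates \eqref{mlrep1} throughout that set.

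The parts I expect to demand the most care are the two analytic ones: the uniform bound licensing term-by-term integration over the unbounded path $\gamma(\delta;\theta)$, and the bookkeeping of branch cuts under $s=\z^{1/(\alpha\beta)}$. The standing hypotheses $0<\alpha,\beta<2$, $\alpha\beta<2$ together with \eqref{cond1} are calibrated precisely for this: $\eta\le\pi$ makes the target path admissible and keeps $\z^{1/\alpha},\z^{1/\beta}$ on their principal sheets; $\eta/(\alpha\beta)>\pi/2$ provides the exponential decay of $\re^{\z^{1/(\alpha\beta)}}$ on the rays, hence convergence of the integral in \eqref{mlrep1}; and $\eta/(\alpha\beta)\le\pi$ ensures the substitution never straddles the negative real axis. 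Throughout, the invariance of \eqref{mlrep1} under the simultaneous exchange $(\alpha,x)\leftrightarrow(\beta,y)$ serves as a convenient consistency check.
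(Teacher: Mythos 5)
Your proof is correct, but it follows a different decomposition than the paper's. The paper first collapses the double series into a single one, $E_{\alpha,\beta}(x,y;\mu)=\sum_{n\ge 0}x^{n}E_{\beta}(y;n\alpha+\mu)$, imports D\v{z}rba\v{s}jan's known Hankel representation of the one-variable function $E_{\beta}(y;n\alpha+\mu)$ on $\gamma(\eps_\beta;\eta_\beta)$ (already valid for every $y\in\Omega^{(-)}(\eps_\beta;\eta_\beta)$), sums only the geometric series in $x\z^{-\alpha/\beta}$ under that integral, then passes to $\gamma(\eps;\eta)$ by the substitution $\z=\xi^{1/\alpha}$ and continues analytically in the single variable $x$ from the disc $|x|<\eps_\alpha$. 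You instead argue from scratch: Hankel's integral for $1/\Gamma(n\alpha+m\beta+\mu)$ on the auxiliary contour $\gamma(\eps^{1/(\alpha\beta)};\eta/(\alpha\beta))$, term-by-term integration of a double geometric series on the polydisc $|x|<\eps_\alpha$, $|y|<\eps_\beta$, the substitution $s=\z^{1/(\alpha\beta)}$, and an identity-theorem continuation in both variables. The paper's route is shorter because the $y$-dependence is handled exactly by the cited one-variable result (no continuation in $y$ is needed), but it leans on that external representation; yours is self-contained and manifestly symmetric under $(\alpha,x)\leftrightarrow(\beta,y)$, at the price of justifying the double-series interchange and the two-variable continuation, which you do correctly: the bounds $\sup|x\,s^{-\alpha}|<1$ and $\sup|y\,s^{-\beta}|<1$ on the contour, the domination by $|\re^{s}s^{-\mu}|$, the identification of the polar loci with $\gamma(\eps_\alpha;\eta_\alpha)$ and $\gamma(\eps_\beta;\eta_\beta)$, and the observation that $\pi/2<\eta/(\alpha\beta)\le\pi$ is exactly condition \eqref{cond1} are precisely the checks needed, and your exponent bookkeeping reproduces \eqref{mlrep1}.
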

\begin{proof}
First, let $|x| < \eps_\alpha$. Taking into account the fact that $\eps_\alpha = \eps^{1/\beta} = %
\l(\eps_\beta^\alpha\r)^{1/\beta} = \eps_\beta^{\alpha/\beta}$ yields next the inequality~\eqref{ineq1}
\begin{equation} \label{ineq1}
\sup_{\z\in \gamma(\eps_\beta;\eta_\beta)} \bl|x\z^{-\alpha/\beta}\br| < 1.
\end{equation}
From definition~\eqref{def_ml}, the expansion of $E_{\alpha,\beta}(x,y; \mu)$ may be rewritten as follows in terms of the corresponding two-parametric M-L function $E_\beta(y; n \alpha + \mu)$ of one variable,
\begin{flalign} \label{oneparml}
E_{\alpha,\beta}(x,y ; \mu) &= \lsum_{n=0}^\infty \lsum_{m=0}^\infty \frac{x^n y^m} %
{\Gamma(n \alpha + m \beta + \mu)} \nonumber\\
&= \lsum_{n=0}^\infty x^n \lsum_{m=0}^\infty \frac{y^m}{\Gamma(m \beta + (n \alpha + \mu))} %
= \lsum_{n=0}^\infty x^n E_\beta(y; n \alpha + \mu).
\end{flalign}
Under the assumptions of Lemma~\ref{lem1}, it is possible to use the known integral representation of $E_\beta(y; \alpha n + \mu)$\ (see, e.g., \cite[Eq.~(2.2)]{Djrba66}) by taking the above $\eps_\beta$\ and $\eta_\beta$\ as the parameters defining the Hankel contour, which is admissible according to inequalities~\eqref{cond1}. For $y\in \Omega^{(-)}(\eps_\beta;\eta_\beta)$, and provided that $\eta_\beta = \eta/\alpha$, the following representations holds from the integral representation of $E_\beta(y; \alpha n + \mu)$
\begin{flalign} \label{intrep0}
E_{\alpha,\beta}(x,y; \mu) &= \lsum_{n=0}^\infty x^n E_\beta(y; n \alpha + \mu) \nonumber\\
&= \lsum_{n=0}^\infty x^n\, \frac{1}{2\pi\ri} \frac{1}{\beta} \lint_{\gamma(\eps_\beta;\eta_\beta)} %
\frac{\re^{\z^{1/\beta}} \z^{\frac{1 - n \alpha - \mu}{\beta}}} {\z - y}\, \rd \z.
\end{flalign}
And by simplifying and using inequality~\eqref{ineq1} one gets
\begin{flalign}
E_{\alpha,\beta}(x,y; \mu) &= \frac{1}{2\pi\ri} \frac{1}{\beta} \lint_{\gamma(\eps_\beta;\eta_\beta)} %
\frac{\re^{\z^{1/\beta}} \z^{\frac{1-\mu}{\beta}}} {\z - y}\, \l(\lsum_{n=0}^\infty %
\l(x\z^{-\alpha/\beta}\r)^{n}\r)\, \rd \z \nonumber\\
&= \frac{1}{2\pi\ri} \frac{1}{\beta} \lint_{\gamma(\eps_\beta;\eta_\beta)} %
\frac{\re^{\z^{1/\beta}} \z^{\frac{1 + \alpha - \mu}{\beta}}} %
{(\z - y)(\z^{\alpha/\beta} - x)}\, \rd \z. \label{intrep1}
\end{flalign}

Now, by rewriting the above integral representation~\eqref{intrep1} along the suitable integral contour
$\gamma(\eps;\eta)$, we obtain
\begin{flalign*}
E_{\alpha,\beta}(x,y; \mu) &= \frac{1}{2\pi\ri} \frac{1}{\beta} \lint_{\gamma(\eps;\eta)} %
\frac{ \re^{\l(\xi^{1/\alpha}\r)^{1/\beta}} \l(\xi^{1/\alpha}\r)^{\frac{1 + \alpha - \mu}{\beta} } } %
{(\xi^{1/\alpha} - y)(\xi^{1/\beta} - x)}\, \frac{1}{\alpha}\, \xi^{\frac{1 - \alpha}{\alpha}}\, \rd \xi
\end{flalign*}
and get the desired integral representation~\eqref{mlrep1} set out in Lemma~\ref{lem1}
\begin{flalign*}
E_{\alpha,\beta}(x,y; \mu) &= \frac{1}{2\pi\ri} \frac{1}{\alpha\beta} \lint_{\gamma(\eps;\eta)} %
\frac{\re^{\xi^{1/(\alpha \beta)}} \xi^{\frac{\alpha + \beta + 1 - \mu}{\alpha \beta} - 1}} %
{(\xi^{1/\alpha} - y)(\xi^{1/\beta} - x)}\, \rd \xi.
\end{flalign*}
The above resulting integral is absolutely convergent and it is an analytic function of $x\in \Omega^{(-)}(\eps_\alpha;\eta_\alpha)$ and $y\in \Omega^{(-)}(\eps_\beta;\eta_\beta)$.

The open disk $D = \{|x| < \eps_\alpha\}$ is contained into the complex region $\Omega^{(-)}(\eps_\alpha;\eta_\alpha)$ for all values of $\eta_\alpha$ taken in the interval $\bl]\pi\alpha/2, \min(\pi,\pi \alpha)\br]$. Therefore, from the principle of analytic continuation Eq.~\eqref{mlrep1} is valid everywhere within the complex region $\Omega^{(-)}(\eps_\alpha;\eta_\alpha)$ and the lemma is established.
\end{proof}

\begin{lemma} \label{lem2}
Let $0 < \alpha, \beta < 2$\ and $\alpha\beta < 2$ Let $\mu$ be any complex number and let $\eta$ verify inequalities~\eqref{cond1},
\[
\pi \alpha\beta/2 < \eta\le \min\bl(\pi, \pi \alpha\beta\br).\]
If $x\in \Omega^{(-)}(\eps_\alpha;\eta_\alpha)$\ and $y\in \Omega^{(+)}(\eps_\beta;\eta_\beta)$, where $\eps_\alpha := \eps^{1/\beta}$, $\eps_\beta := \eps^{1/\alpha}$ and $\eta_\alpha := \eta/\beta$, $\eta_\beta := \eta/\alpha$, then the integral representation holds
\begin{equation} \label{mlrep2}
E_{\alpha,\beta}(x,y;\mu) = \frac{1}{\beta}\, \frac{ \re^{y^{1/\beta}} %
y^{\frac{1 + \alpha - \mu}{\beta}} } {y^{\alpha/\beta} - x} + \frac{1}{2\pi\ri} \frac{1}{\alpha\beta} %
\lint_{\gamma(\eps;\eta)} \frac{ \re^{\z^{1/(\alpha \beta)} } \z^{\frac{\alpha + \beta + 1 - \mu} %
{\alpha \beta} - 1} } {(\z^{1/\alpha} - y)(\z^{1/\beta} - x)}\, \rd \z.
\end{equation}
\end{lemma}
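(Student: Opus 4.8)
The plan is to reproduce the proof of Lemma~\ref{lem1} line for line, the only new feature being that, since $y$ now lies to the right of the contour $\gamma(\eps_\beta;\eta_\beta)$, the one-variable integral representation used to rewrite $E_\beta(y;n\alpha+\mu)$ carries an extra summand, which after summation over $n$ yields the closed-form term $\tfrac1\beta\,\re^{y^{1/\beta}}y^{(1+\alpha-\mu)/\beta}/(y^{\alpha/\beta}-x)$ appearing in~\eqref{mlrep2}.

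First I would again take $|x|<\eps_\alpha$. As in Lemma~\ref{lem1} this gives $\sup_{\z\in\gamma(\eps_\beta;\eta_\beta)}|x\z^{-\alpha/\beta}|<1$, and since $y\in\Omega^{(+)}(\eps_\beta;\eta_\beta)$ forces $|y|>\eps_\beta$, one also gets $|x\,y^{-\alpha/\beta}|<|x|\,\eps_\beta^{-\alpha/\beta}=|x|\,\eps_\alpha^{-1}<1$. Expanding as in~\eqref{oneparml}, $E_{\alpha,\beta}(x,y;\mu)=\lsum_{n\ge0}x^nE_\beta(y;n\alpha+\mu)$, and inserting the Hankel representation of $E_\beta(y;n\alpha+\mu)$ valid for $y\in\Omega^{(+)}(\eps_\beta;\eta_\beta)$ (the companion of~\cite[Eq.~(2.2)]{Djrba66} on the right-hand region, admissible for every $n$ under~\eqref{cond1} with the common parameters $\eps_\beta,\eta_\beta$), namely
\[
E_\beta(y;n\alpha+\mu)=\frac1\beta\,\re^{y^{1/\beta}}y^{\frac{1-n\alpha-\mu}{\beta}}+\frac{1}{2\pi\ri}\frac1\beta\lint_{\gamma(\eps_\beta;\eta_\beta)}\frac{\re^{\z^{1/\beta}}\z^{\frac{1-n\alpha-\mu}{\beta}}}{\z-y}\,\rd\z,
\]
I would split the double series into the residue part and the integral part.

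The residue part sums to $\frac1\beta\,\re^{y^{1/\beta}}y^{\frac{1-\mu}{\beta}}\lsum_{n\ge0}(x\,y^{-\alpha/\beta})^n=\frac1\beta\,\re^{y^{1/\beta}}y^{\frac{1+\alpha-\mu}{\beta}}/(y^{\alpha/\beta}-x)$, which is exactly the first term of~\eqref{mlrep2}. The integral part is treated verbatim as in Lemma~\ref{lem1}: interchanging summation and integration (legitimate by the uniform bound above), summing $\lsum_n(x\z^{-\alpha/\beta})^n$, and substituting $\z=\xi^{1/\alpha}$ (so that $\gamma(\eps_\beta;\eta_\beta)$ is carried onto $\gamma(\eps;\eta)$, since $\eps_\beta^\alpha=\eps$ and $\alpha\eta_\beta=\eta$, with $\rd\z=\frac1\alpha\xi^{\frac{1-\alpha}{\alpha}}\rd\xi$) reproduces the Hankel integral of~\eqref{mlrep2}. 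Both terms are absolutely convergent and analytic in $x$ on $\Omega^{(-)}(\eps_\alpha;\eta_\alpha)$ and in $y$ on $\Omega^{(+)}(\eps_\beta;\eta_\beta)$; in particular the pole $x=y^{\alpha/\beta}$ of the new term lies in $\Omega^{(+)}(\eps_\alpha;\eta_\alpha)$ (because $|y|>\eps_\beta$ and $|\arg y|<\eta_\beta$ give $|y^{\alpha/\beta}|>\eps_\alpha$ and $|\arg y^{\alpha/\beta}|<\eta_\alpha$), hence it never meets $\Omega^{(-)}(\eps_\alpha;\eta_\alpha)$. Finally, since the disk $\{|x|<\eps_\alpha\}$ sits inside $\Omega^{(-)}(\eps_\alpha;\eta_\alpha)$, analytic continuation in $x$ propagates the identity to all of $\Omega^{(-)}(\eps_\alpha;\eta_\alpha)$, which proves~\eqref{mlrep2}.

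The main obstacle is exactly the bookkeeping already present in Lemma~\ref{lem1}, now slightly heavier: one must check that $(\eps_\beta,\eta_\beta)$ is an admissible Hankel pair for the right-hand-region representation of $E_\beta(y;n\alpha+\mu)$ simultaneously for all $n$, justify the term-by-term integration, and verify that the newly produced pole at $x=y^{\alpha/\beta}$ cannot enter $\Omega^{(-)}(\eps_\alpha;\eta_\alpha)$. A shorter alternative would be to continue the identity of Lemma~\ref{lem1} directly in $y$ across $\gamma(\eps_\beta;\eta_\beta)$: as $y$ crosses the contour the simple pole $\z=y$ of the integrand in~\eqref{intrep1} crosses $\gamma(\eps_\beta;\eta_\beta)$, and restoring the contour contributes $2\pi\ri$ times $\re^{y^{1/\beta}}y^{\frac{1+\alpha-\mu}{\beta}}/(y^{\alpha/\beta}-x)$, the extra term of~\eqref{mlrep2}; here one only needs care with the branches of the fractional powers along the deformation.
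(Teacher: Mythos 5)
Your proposal is correct, but it reaches \eqref{mlrep2} by a different mechanism than the paper. The paper stays at the two-variable level: it takes the intermediate representation \eqref{intrep1} from Lemma~\ref{lem1} over an enlarged contour $\gamma(\eps_{\beta_1};\eta_\beta)$ with $\eps_{\beta_1}>|y|$ (so that $y$ falls to the \emph{left} of the bigger contour, giving \eqref{intrep2}), and then applies Cauchy's theorem to the difference of the integrals over $\gamma(\eps_{\beta_1};\eta_\beta)$ and $\gamma(\eps_\beta;\eta_\beta)$; the residue at the pole $\z=y$ caught between the two contours is exactly the closed-form term $\tfrac1\beta\,\re^{y^{1/\beta}}y^{(1+\alpha-\mu)/\beta}/(y^{\alpha/\beta}-x)$ (this is precisely the ``shorter alternative'' you sketch in your last sentence, with the contour deformation made concrete by the auxiliary radii $\eps_{\beta_1},\eps_{\alpha_1}$). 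Your main route instead works term by term in the series \eqref{oneparml}: you insert D\v{z}rba\v{s}jan's right-region representation of $E_\beta(y;n\alpha+\mu)$, whose exponential summands form a geometric series in $x\,y^{-\alpha/\beta}$ that sums to the same closed-form term, while the integral parts are handled verbatim as in Lemma~\ref{lem1}. What your version buys is an explicit derivation of the extra term with the convergence condition $|x\,y^{-\alpha/\beta}|<1$ made transparent, plus the useful verification (absent from the paper) that the new pole $x=y^{\alpha/\beta}$ lies in $\Omega^{(+)}(\eps_\alpha;\eta_\alpha)$ so the analytic continuation in $x$ is legitimate; the cost is that you must invoke the right-region one-variable formula uniformly in $n$ under \eqref{cond1} and justify the interchange of sum and integral, whereas the paper's contour-deformation argument produces the residue in one stroke without any resummation. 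The admissibility of the pair $(\eps_\beta,\eta_\beta)$ for the one-variable representations is glossed over equally in both arguments, so it is not a defect specific to yours.
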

\begin{proof}
By assumption, the point $y$ is located to the right of the Hankel contour $\gamma(\eps_\beta;\eta_\beta)$, that is $y\in \Omega^{(+)}(\eps_\beta;\eta_\beta)$. Then, for any $\eps_{\beta_1} > |y|$, we have that $y\in \Omega^{(-)}(\eps_{\beta_1};\eta_\beta)$\ and $x\in \Omega^{(-)}(\eps_{\alpha_1};\eta_\alpha)$ for $\eps_{\alpha_1} = \eps^{1/\beta_1}$. Therefore, by~\eqref{intrep1} we get the integral representation
\begin{equation} \label{intrep2}
E_{\alpha,\beta}(x,y;\mu) = \frac{1}{2\pi\ri} \frac{1}{\beta} \lint_{\gamma(\eps_{\beta_1} ; %
\eta_\beta)} \frac{ \re^{\z^{1/\beta} } \z^{\frac{\alpha + \beta - \mu}{\beta}} } %
{(\z - y)(\z^{\alpha/\beta} - x)}\, \rd \z.
\end{equation}
On the other hand, if $\eps_\beta < |y| < \eps_{\beta_1}$, then $|\arg y| < \eta_\beta$ and, by Cauchy theorem,
\begin{flalign}  \label{intrep3}
E_{\alpha,\beta}(x,y;\mu) &= \frac{1}{2\pi\ri} \frac{1}{\beta}\!\! \lint_{\gamma(\eps_{\beta_1}; \eta_\beta) - \gamma(\eps_\beta;\eta_\beta)} \frac{ \re^{\z^{1/\beta}} %
\z^{\frac{1 + \alpha - \mu}{\beta}} } {(\z^{1/\alpha} - y)(\z^{1/\beta} - x)}\, \rd \z\nonumber\\
&= \frac{1}{\beta}\, \frac{ \re^{y^{1/\beta}} y^{\frac{1 + \alpha - \mu}{\beta}} }{y^{\alpha/\beta} - x}.
\end{flalign}
Hence, from Eqs.~\eqref{intrep2} and~\eqref{intrep3}, we obtain the integral representation~\eqref{mlrep2} and Lemma~\ref{lem2} follows.
\end{proof}

\begin{remark} \label{rem1}
Symmetrically, for $x\in \Omega^{(+)}(\eps_\alpha;\eta_\alpha)$, $y\in \Omega^{(-)}(\eps_\beta;\eta_\beta)$\ and under the assumptions of Lemma~\ref{lem2}, the integral representation of $E_{\alpha,\beta}(x,y ; \mu)$ is shown in a same manner to be
\begin{equation} \label{mlrep3}
E_{\alpha,\beta}(x,y;\mu) = \frac{1}{\alpha}\, \frac{ \re^{x^{1/\alpha}} %
x^{\frac{1 + \beta - \mu}{\alpha}} } {x^{\beta/\alpha} - y} + \frac{1}{2\pi\ri} \frac{1}{\alpha\beta} %
\lint_{\gamma(\eps;\eta)} \frac{ \re^{\z^{1/(\alpha \beta)} } \z^{\frac{\alpha + \beta + 1 - \mu} %
{\alpha \beta} - 1} } {(\z^{1/\alpha} - y)(\z^{1/\beta} - x)}\, \rd \z,
\end{equation}
by simply interchanging $\alpha$\ and $\beta$ in representation~\eqref{mlrep2}.
\end{remark}

\begin{lemma} \label{lem3}
Let $0 < \alpha, \beta < 2$\ and $\alpha\beta < 2$. Let $\mu$ be any complex number and let $\eta$ verify inequalities~\eqref{cond1}.
If $x\in \Omega^{(+)}(\eps_\alpha;\eta_\alpha)$\ and $y\in \Omega^{(+)}(\eps_\beta;\eta_\beta)$, where $\eps_\alpha := \eps^{1/\beta}$, $\eps_\beta := \eps^{1/\alpha}$\ and $\eta_\alpha := \eta/\beta$, $\eta_\beta := \eta/\alpha$, then the integral representation holds
\begin{flalign} \label{mlrep4}
E_{\alpha,\beta}(x,y;\mu) = \frac{1}{\alpha}\, \frac{ \re^{x^{1/\alpha}} %
x^{\frac{1 + \beta - \mu}{\alpha}} }{x^{\beta/\alpha} - y} &+ \frac{1}{\beta}\, \frac{\re^{y^{1/\beta}} %
y^{\frac{1 + \alpha - \mu}{\beta}} } {y^{\alpha/\beta} - x}\nonumber\\
& + \frac{1}{2\pi\ri} \frac{1}{\alpha\beta} \lint_{\gamma(\eps;\eta)} %
\frac{ \re^{\z^{1/(\alpha \beta)} } \z^{\frac{\alpha + \beta + 1 - \mu}{\alpha \beta} - 1} } %
{(\z^{1/\alpha} - y)(\z^{1/\beta} - x)}\, \rd \z.
\end{flalign}
\end{lemma}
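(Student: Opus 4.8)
The plan is to combine the single‑variable strategy of Lemma~\ref{lem1} with the residue/Cauchy argument used in Lemma~\ref{lem2}, applying the latter twice to peel off one exponential term for each of the two variables. Concretely, I would start from the reduction~\eqref{oneparml},
\[
E_{\alpha,\beta}(x,y;\mu) = \lsum_{n=0}^\infty x^n E_\beta(y; n\alpha + \mu),
\]
but now choose the Hankel contour so that \emph{both} $y$ lies in the exterior region for the $E_\beta(\cdot; n\alpha+\mu)$ contour and, after resummation in $n$, the variable $x$ lies in the exterior region as well. Following Lemma~\ref{lem2}, since $y\in\Omega^{(+)}(\eps_\beta;\eta_\beta)$ one has for $\eps_{\beta_1}>|y|$ the representation~\eqref{intrep2} on the enlarged contour $\gamma(\eps_{\beta_1};\eta_\beta)$, and the difference of the two contours picks up exactly the residue at $\z=y$, namely
\[
\frac{1}{\beta}\,\frac{\re^{y^{1/\beta}}\,y^{\frac{1+\alpha-\mu}{\beta}}}{y^{\alpha/\beta}-x}.
\]

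Next I would treat the $x$ variable the same way. On the enlarged $\beta_1$‑contour, the point $x$ sits in $\Omega^{(+)}(\eps_{\alpha_1};\eta_\alpha)$ (because by hypothesis $x\in\Omega^{(+)}(\eps_\alpha;\eta_\alpha)$ and enlarging $\eps_\beta$ to $\eps_{\beta_1}$ only enlarges $\eps_\alpha$ to $\eps_{\alpha_1}$). So in the integrand of~\eqref{intrep2}, viewed as a function of $\z$, the factor $1/(\z^{\alpha/\beta}-x)$ has a pole at the point $\z$ with $\z^{\alpha/\beta}=x$, i.e.\ $\z = x^{\beta/\alpha}$, which now lies between the small contour $\gamma(\eps_\beta;\eta_\beta)$ and the large contour $\gamma(\eps_{\beta_1};\eta_\beta)$. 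Shrinking the contour back from $\eps_{\beta_1}$ to $\eps_\beta$ therefore produces, by Cauchy's theorem, a second residue contribution, which a direct computation identifies as
\[
\frac{1}{\alpha}\,\frac{\re^{x^{1/\alpha}}\,x^{\frac{1+\beta-\mu}{\alpha}}}{x^{\beta/\alpha}-y},
\]
exactly the first term of~\eqref{mlrep4}; the leftover integral over $\gamma(\eps_\beta;\eta_\beta)$ is then recast along $\gamma(\eps;\eta)$ by the substitution $\z\mapsto\xi^{1/\alpha}$ used at the end of Lemma~\ref{lem1}, giving the stated Hankel integral. Adding the two residues and the integral yields~\eqref{mlrep4}, and a final appeal to analytic continuation (as in Lemma~\ref{lem1}) extends the identity from a polydisc neighbourhood to the full region $\Omega^{(+)}(\eps_\alpha;\eta_\alpha)\times\Omega^{(+)}(\eps_\beta;\eta_\beta)$.

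Alternatively — and this is probably the cleanest write‑up — I would avoid redoing the contour surgery and instead bootstrap directly from Remark~\ref{rem1} (equivalently Lemma~\ref{lem2}): representation~\eqref{mlrep3} is valid for $x\in\Omega^{(+)}(\eps_\alpha;\eta_\alpha)$ and $y\in\Omega^{(-)}(\eps_\beta;\eta_\beta)$, and one only has to move $y$ across the contour $\gamma(\eps_\beta;\eta_\beta)$. Since the $x$‑term $\frac{1}{\alpha}\re^{x^{1/\alpha}}x^{(1+\beta-\mu)/\alpha}/(x^{\beta/\alpha}-y)$ is meromorphic in $y$ and the Hankel integral in~\eqref{mlrep3} is precisely the object analysed in Lemma~\ref{lem2}, pushing $y$ from $\Omega^{(-)}$ to $\Omega^{(+)}$ adds exactly the residue term $\frac{1}{\beta}\re^{y^{1/\beta}}y^{(1+\alpha-\mu)/\beta}/(y^{\alpha/\beta}-x)$ by the very computation~\eqref{intrep3}, and one lands on~\eqref{mlrep4}.

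The main obstacle I anticipate is bookkeeping rather than conceptual: one must check that when the contour radius is inflated from $\eps_\beta$ to $\eps_{\beta_1}$ the \emph{two} poles $\z=y$ and $\z=x^{\beta/\alpha}$ are simultaneously captured and that no spurious pole of the multivalued factors $\z^{1/\alpha}$, $\z^{1/\beta}$ or of $\re^{\z^{1/(\alpha\beta)}}$ crosses the contour — this is where the hypotheses $0<\alpha,\beta<2$, $\alpha\beta<2$ and the angular constraint~\eqref{cond1} are used, exactly as in Lemmas~\ref{lem1} and~\ref{lem2}. One also has to verify the residues are computed on the correct branch (so that $\re^{(\z^{1/\beta})}$ evaluated at $\z=y$ really gives $\re^{y^{1/\beta}}$ with the principal determination), and that absolute convergence of the remaining $\gamma(\eps;\eta)$‑integral and the geometric series $\sum_n (x\z^{-\alpha/\beta})^n$ still holds in the relevant sub‑disc before analytic continuation is invoked; all of these are routine given the earlier lemmas.
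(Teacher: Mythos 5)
Your overall route is the same as the paper's: represent the function by the integral already established on a suitably enlarged Hankel contour and recover the two exponential terms as residues of the simple poles $\z^{1/\alpha}=y$ and $\z^{1/\beta}=x$ when the contour is shrunk back to $\gamma(\eps;\eta)$. However, the pivotal step is stated in a way that would fail as written. You claim that on the enlarged contour the point $x$ sits in $\Omega^{(+)}(\eps_{\alpha_1};\eta_\alpha)$ ``because enlarging only enlarges $\eps_\alpha$''; this does not follow (a point outside the small contour may well be inside the larger one), and it is inconsistent with the two things you need next. First, the validity of \eqref{intrep2} on $\gamma(\eps_{\beta_1};\eta_\beta)$ rests on the geometric series $\sum_n (x\z^{-\alpha/\beta})^n$ converging on that contour, i.e.\ $|x|<\eps_{\beta_1}^{\alpha/\beta}=\eps_{\alpha_1}$, which means precisely $x\in\Omega^{(-)}(\eps_{\alpha_1};\eta_\alpha)$, the opposite of what you assert. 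Second, the pole $\z=x^{\beta/\alpha}$ lies between $\gamma(\eps_\beta;\eta_\beta)$ and $\gamma(\eps_{\beta_1};\eta_\beta)$ exactly when $\eps_\alpha<|x|<\eps_{\alpha_1}$, again requiring $x$ to be swallowed by the enlargement. So the region bookkeeping is not ``routine''; it is the substance of the proof, and as written your justification both invalidates your starting representation and contradicts your residue step.

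The repair is exactly the paper's argument: since $x$ and $y$ both lie to the right of the original contours, choose the intermediate radius $\eps_1>\eps$ so that one point crosses the contour and the other does not --- say $|x|^{\beta}<\eps_1<|y|^{\alpha}$, which is possible provided $x^{\beta}\neq y^{\alpha}$ (the other ordering is symmetric via Remark~\ref{rem1}, and the excluded set is recovered by continuity/analytic continuation). Then $x\in\Omega^{(-)}(\eps_{\alpha_1};\eta_\alpha)$ while $y\in\Omega^{(+)}(\eps_{\beta_1};\eta_\beta)$, so Lemma~\ref{lem2} applies verbatim on $\gamma(\eps_1;\eta)$ and yields the $y$-term plus the integral \eqref{intrep4}; shrinking from $\eps_{\alpha_1}$ to $\eps_\alpha$ after the substitution $t=\z^{1/\beta}$ captures the remaining pole at $t=x$ by Cauchy's theorem, as in \eqref{intrep5}, giving \eqref{mlrep4}. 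Alternatively, your one-step version (take $\eps_1>\max(|x|^{\beta},|y|^{\alpha})$, apply Lemma~\ref{lem1} on the enlarged contour, and collect \emph{both} residues when shrinking) also works, but only after you assert that both points lie in the $\Omega^{(-)}$ regions of the enlarged contour --- the exact opposite of your stated configuration. Your second, ``bootstrap from Remark~\ref{rem1}'' variant has the same hole: ``pushing $y$ across the contour'' is again a contour enlargement, and you must track whether that enlargement also swallows $x$, which is precisely the dichotomy the paper sets up.
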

\begin{proof}
By assumption, each of the points $x$ and $y$ lies on the right-hand side of the Hankel contours $\gamma(\eps_\alpha;\eta_\alpha)$ and $\gamma(\eps_\beta;\eta_\beta)$, respectively; that is in the two regions of the complex plane defined by $x\in \Omega^{(+)}(\eps_\alpha;\eta_\alpha)$ and $y\in \Omega^{(+)}(\eps_\beta;\eta_\beta)$\ (where the parameters $\eps_\alpha$ and $\eps_\beta$ correspond to $\eps$). Now, choose $\eps_1 > \eps$\ such that one of the coordinates is to the right of the contour and the other coordinate to its left (which is always possible provided that $x^\beta\neq y^\alpha$).

Let $x\in \Omega^{(-)}(\eps_{\alpha_1};\eta_\alpha)$\ and $y\in \Omega^{(+)}(\eps_{\beta_1};\eta_\beta)$ (i.e., $x < y$) for $\eps_{\alpha_1} = \eps^{1/\beta_1}$\ and $\eps_{\beta_1} = \eps^{1/\alpha_1}$. Then, by Eq.~\eqref{mlrep2} in Lemma~\ref{lem2}, we have the integral representation
\begin{equation} \label{intrep4}
E_{\alpha,\beta}(x,y ;  \mu) = \frac{1}{\beta}\, \frac{\re^{y^{1/\beta}} %
y^{\frac{1 + \alpha - \mu}{\beta}}} {y^{\alpha/\beta} - x} + \frac{1}{2\pi\ri} \frac{1}{\alpha \beta} %
\lint_{\gamma(\eps_1;\eta)} \frac{ \re^{\z^{1/(\alpha\beta)}} %
\z^{\frac{\alpha + \beta + 1 - \mu}{\alpha \beta} - 1} }{(\z^{1/\alpha} - y)(\z^{1/\beta} - x)}\, \rd \z.
\end{equation}

Upon changing the variable $\z^{1/\beta}$ for $t$ in~\eqref{intrep4}, the above integral may be rewritten under the form
\begin{equation} \label{changevar}
\frac{1}{2\pi\ri} \frac{1}{\alpha} \lint_{\gamma(\eps_1;\eta)} \frac{ \re^{t^{1/\alpha} } %
t^{\frac{1 + \beta - \mu}{\alpha}} } {(t - x)(t^{\beta/\alpha} - y)}\, \rd t.
\end{equation}
Now, when $\eps_\alpha < |x| < \eps_{\alpha_1}$, then $|\arg x| < \eta_\alpha$ and, by Cauchy theorem,
\begin{equation} \label{intrep5}
\kern-.4cm E_{\alpha,\beta}(x,y ; \mu) = \frac{1}{2\pi\ri} \frac{1}{\alpha} \!\!
\lint_{\gamma(\eps_{\alpha_1};\eta_\alpha)-\gamma(\eps_\alpha;\eta_\alpha)} %
\frac{ \re^{\z^{1/\alpha}} \z^{\frac{1 + \beta - \mu}{\alpha}} }{(\z^{\beta/\alpha} - y)(\z - x)}\, %
\rd \z = \frac{1}{\alpha}\, \frac{ \re^{x^{1/\alpha}} x^{\frac{1 + \beta - \mu}{\alpha}} } %
{x^{\beta/\alpha} - y}.
\end{equation}
Finally, from Eqs.~\eqref{intrep4} and~\eqref{intrep5} the representation~\eqref{mlrep4} holds true, and the lemma is established.
\end{proof}

\begin{lemma} \label{{lem4}}
If $\Re(\mu) > 0$, then the integral representations~\eqref{mlrep1}, \eqref{mlrep2}, \eqref{mlrep3} and \eqref{mlrep4} remain valid for $\alpha = 2$\ or $\beta = 2$.
\end{lemma}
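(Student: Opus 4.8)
The plan is to revisit the proofs of Lemmas~\ref{lem1}--\ref{lem3} and to pin down the one place where $\alpha<2$, $\beta<2$ was used strictly, namely the appeal in~\eqref{intrep0} to the one-variable Hankel representation of $E_\beta(y;\alpha n+\mu)$ taken from \cite[Eq.~(2.2)]{Djrba66}: its admissibility range $\pi\beta/2<\eta_\beta\le\min(\pi,\pi\beta)$ shrinks to the empty set exactly at $\beta=2$, and symmetrically for $\alpha$. I would handle the two cases symmetrically and carry out $\beta=2$; then $\alpha\beta<2$ forces $0<\alpha<1$, and the case $\alpha=2$ is obtained by the substitution $\alpha\leftrightarrow\beta$, $x\leftrightarrow y$, under which the kernel of~\eqref{mlrep1} and the hypotheses on the sectors are invariant. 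With $\beta=2$ one still has the expansion $E_{\alpha,2}(x,y;\mu)=\lsum_{n\ge0}x^{n}E_{2}(y;\alpha n+\mu)$, so all that has to be supplied is a Hankel-type representation for the one-variable function $E_{2}(y;\nu)$ at this boundary order, valid for the second parameters $\nu=\alpha n+\mu$ that occur; once it is in hand, steps~\eqref{intrep0}--\eqref{intrep1} — substitution of the representation, term-by-term integration justified by a geometric bound of the type~\eqref{ineq1}, summation of the geometric series, and the change of variable $\z\mapsto\z^{1/\beta}$ bringing the contour back to $\gamma(\eps;\eta)$ — transfer verbatim, and one checks that the exponents recombine into $\z^{\frac{\alpha+\beta+1-\mu}{\alpha\beta}-1}$ with $\beta=2$.

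The hypothesis $\Re(\mu)>0$ enters precisely here: it keeps $\Re(\nu)=\alpha n+\Re(\mu)>0$ for every $n\ge0$, which is what places $\nu$ in the half-plane where the boundary ($\beta=2$) representation of $E_{2}(y;\nu)$ is legitimate. With~\eqref{mlrep1} established for $\beta=2$ (hence, by the symmetry above, for $\alpha=2$), the remaining representations~\eqref{mlrep2}, \eqref{mlrep3} and~\eqref{mlrep4} follow from it by exactly the Cauchy-theorem arguments of Lemmas~\ref{lem2}, \ref{lem3} and Remark~\ref{rem1}: those proofs never used $\alpha,\beta<2$, only the already-proved~\eqref{mlrep1}, and the residues produce the same elementary closed-form terms. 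I would also remark that the absolute convergence and analyticity of the integral in~\eqref{mlrep1}, and hence the analytic-continuation step closing Lemma~\ref{lem1}, survive $\beta=2$ untouched, since on the rays of $\gamma(\eps;\eta)$ the condition $\eta>\pi\alpha\beta/2$ still yields $\cos(\eta/(\alpha\beta))<0$, so $\re^{\z^{1/(\alpha\beta)}}$ decays exponentially and the integrand admits an integrable majorant.

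The main obstacle is thus the boundary one-variable representation: establishing it for $\Re(\nu)>0$ and checking that its domain of validity in $y$ still contains the sectors $\Omega^{(-)}$, $\Omega^{(+)}$ from Lemmas~\ref{lem1}--\ref{lem3}; the rest is a routine re-run. If one prefers to avoid this delicate boundary statement altogether, a continuity argument does the job: fix $x,y$ and $\mu$ with $\Re(\mu)>0$, take $\beta_{k}\uparrow2$ with $\alpha\beta_{k}<2$ and $\eta$ in the nonempty limiting admissible interval, apply Lemma~\ref{lem1} (or~\ref{lem2}, \ref{lem3}) to each pair $(\alpha,\beta_{k})$, and pass to the limit — the left-hand sides tending to $E_{\alpha,2}(x,y;\mu)$ by uniform convergence of the double series~\eqref{def_ml} on compacta, and the right-hand sides by dominated convergence, the bound $|\re^{\z^{1/(\alpha\beta_{k})}}|\le\re^{|\z|^{1/(\alpha\beta_{k})}\cos(\eta/(\alpha\beta_{k}))}$ with $\eta/(\alpha\beta_{k})\to\eta/(2\alpha)>\pi/2$ furnishing a majorant uniform in $k$.
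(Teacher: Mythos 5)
Your fallback continuity argument is, in substance, the paper's entire proof: the paper settles this lemma in one sentence, saying that it ``follows immediately by passing to the limit with respect to the corresponding parameters'' in the representations \eqref{mlrep1}--\eqref{mlrep4}, which is exactly your scheme of applying Lemmas~\ref{lem1}--\ref{lem3} to $(\alpha,\beta_k)$ with $\beta_k\uparrow 2$ and passing to the limit. Your write-up is in fact more careful than the paper's, since you supply the reasons the two sides converge (uniform convergence of the double series \eqref{def_ml} on compacta for the left-hand side; on the right-hand side a dominated-convergence majorant of the form $\exp\bigl(-c\,|\z|^{1/(2\alpha)}\bigr)$ on the rays, available because $\eta/(\alpha\beta_k)$ stays bounded away from $\pi/2$ when $\eta$ is chosen strictly inside the limiting interval from \eqref{cond1}, and because fixed $x$, $y$ strictly interior to the limiting regions lie in the approximating regions $\Omega^{(-)}(\eps^{1/\beta_k};\eta/\beta_k)$, etc., for $k$ large). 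Your primary route --- re-running the proof of Lemma~\ref{lem1} after first establishing a boundary Hankel representation for $E_{2}(y;\nu)$ valid for $\Re(\nu)>0$ --- is a genuinely different and heavier path that the paper does not take, and you correctly flag its key ingredient as unproved; as written it is a plan rather than a proof, so the lemma really rests on your limit argument. One point both you and the paper leave implicit: keeping \eqref{cond1} nonvacuous at $\beta=2$ forces $\alpha<1$ (as you note), and the hypothesis $\Re(\mu)>0$ plays a visible role only in your first route (where the exponential factor ceases to decay and algebraic decay in $\mu$ must take over); in the limit argument with $\eta$ strictly interior it is never actually invoked, which mirrors the paper's own silence on where that hypothesis enters.
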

\begin{proof}
The lemma follows immediately by passing to the limit with respect to the corresponding parameters in representations~\eqref{mlrep1}, \eqref{mlrep2}, \eqref{mlrep3} and~\eqref{mlrep4}.
\end{proof}

\section{Asymptotic behaviour} \label{as}
The asymptotic properties of the function $E_{\alpha,\beta}(x,y;\mu)$ for large values of $|x|$ and $|y|$ are of particular interest.

\begin{theorem} \label{main}
Let $0 < \alpha, \beta < 2$\ and $\alpha\beta < 2$. Let $\mu$ be any complex number and $\tau$ be any real number satisfying inequalities~\eqref{cond1}
\[
\pi \alpha\beta/2 < \tau\le \min\bl(\pi, \pi \alpha\beta\br).\]
Then, for all integer $r\ge 1$, the function $E_{\alpha,\beta}(x,y; \mu)$ verifies the following asymptotic formulas drawn from its integral representations whenever $|x|\to \infty$ and $|y|\to \infty$.
\bi
\item[1)]\ If $|\arg x|\le \tau/\beta$\ and $|\arg y|\le \tau/\beta$, then
\begin{flalign} \label{as1}
E_{\alpha,\beta}(x,y; \mu) &= \frac{1}{\alpha}\, \frac{ \re^{x^{1/\alpha}} %
x^{\frac{1 + \beta - \mu}{\alpha}} }{x^{\beta/\alpha} - y} + \frac{1}{\beta}\, \frac{\re^{y^{1/\beta}} %
y^{\frac{1 + \alpha - \mu}{\beta}} } {y^{\alpha/\beta} - x}\nonumber\\
+ \lsum_{n=1}^{r_\beta} &\lsum_{m=1}^{r_\alpha} \frac{x^{-n} y^{-m}}{\Gamma(\mu - n\alpha - m\beta)} %
+ o\l(|xy|^{-1} |x|^{-r_\alpha}\r) + o\l(|xy|^{-1} |y|^{-r_\beta}\r)
\end{flalign}

\item[2)]\ If $|\arg x|\le \tau/\beta$\ and $\tau/\alpha < |\arg y|\le \pi$, then
\begin{flalign} \label{as2}
E_{\alpha,\beta}(x,y; \mu) &= \frac{1}{\alpha}\, \frac{ \re^{x^{1/\alpha}} %
x^{\frac{1 + \beta - \mu}{\alpha}} }{x^{\beta/\alpha} - y}\nonumber\\
+ \lsum_{n=1}^{r_\beta} &\lsum_{m=1}^{r_\alpha} \frac{x^{-n} y^{-m}}{\Gamma(\mu - n\alpha - m\beta)} %
+ o\l(|xy|^{-1} |x|^{-r_\alpha}\r) + o\l(|xy|^{-1} |y|^{-r_\beta}\r)
\end{flalign}

\item[3)]\ If $\tau/\beta < |\arg x|\le \pi$\ and $|\arg y|\le \tau/\alpha$, then
\begin{flalign} \label{as3}
E_{\alpha,\beta}(x,y; \mu) &= \frac{1}{\beta}\, \frac{ \re^{y^{1/\beta}} %
y^{\frac{1 + \alpha - \mu}{\beta}} }{y^{\alpha/\beta} - x}\nonumber\\
+ \lsum_{n=1}^{r_\beta} &\lsum_{m=1}^{r_\alpha} \frac{x^{-n} y^{-m}}{\Gamma(\mu - n\alpha - m\beta)} %
+ o\l(|xy|^{-1} |x|^{-r_\alpha}\r) + o\l(|xy|^{-1} |y|^{-r_\beta}\r)
\end{flalign}

\item[4)]\ If $\tau/\beta < |\arg x|\le \pi$\ and $\tau/\alpha < |\arg y|\le \pi$, then
\begin{flalign} \label{as4}
E_{\alpha,\beta}(x,y; \mu) =& \lsum_{n=1}^{r_\beta} \lsum_{m=1}^{r_\alpha} \frac{x^{-n} y^{-m}} %
{\Gamma(\mu - n\alpha - m\beta)}\nonumber\\
&+ o\l(|xy|^{-1} |x|^{-r_\alpha}\r) + o\l(|xy|^{-1} |y|^{-r_\beta}\r).
\end{flalign}
\ei
\end{theorem}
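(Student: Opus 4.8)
The plan is to obtain each of the four asymptotic formulas from the integral representation of Lemmas~\ref{lem1}--\ref{lem3} (and Remark~\ref{rem1}) that matches the angular regime in question, and then to reduce everything to the asymptotic evaluation of the one Hankel integral common to all of them. Precisely, case~1) will be read off from~\eqref{mlrep4}, case~2) from~\eqref{mlrep3}, case~3) from~\eqref{mlrep2}, and case~4) from~\eqref{mlrep1}, taking $\eta=\tau$ or, when the estimates below require it, any value of $\eta$ still admissible in~\eqref{cond1}; since the closed angular sectors of each case are interior to the regions $\Omega^{(\pm)}$ of the relevant lemma once $|x|,|y|$ are large, and the exponential prefactors displayed in~\eqref{as1}--\eqref{as4} are exactly the residue terms already isolated there, it remains only to analyse
\[
J(x,y):=\frac{1}{2\pi\ri}\,\frac{1}{\alpha\beta}\lint_{\gamma(\eps;\eta)}\frac{\re^{\z^{1/(\alpha\beta)}}\,\z^{\frac{\alpha+\beta+1-\mu}{\alpha\beta}-1}}{(\z^{1/\alpha}-y)(\z^{1/\beta}-x)}\,\rd\z
\]
as $|x|,|y|\to\infty$ (with the standing genericity $x^{\beta}\ne y^{\alpha}$ already needed in Lemma~\ref{lem3}).

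First I would substitute $\z=w^{\alpha\beta}$, so that $\z^{1/\alpha}=w^{\beta}$, $\z^{1/\beta}=w^{\alpha}$, $\z^{1/(\alpha\beta)}=w$, which carries $\gamma(\eps;\eta)$ onto a Hankel-type contour $\gamma'$ with rays $\arg w=\pm\eta/(\alpha\beta)$ and circular arc of radius $\eps^{1/(\alpha\beta)}$, and collapses $J$ after a short computation to
\[
J(x,y)=\frac{1}{2\pi\ri}\lint_{\gamma'}\frac{\re^{w}\,w^{\alpha+\beta-\mu}}{(w^{\alpha}-x)(w^{\beta}-y)}\,\rd w .
\]
Condition~\eqref{cond1} is exactly what makes $\eta/(\alpha\beta)>\pi/2$, hence $\Re w<0$ along both rays of $\gamma'$, so that $\re^{w}$ decays exponentially there. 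Next I would insert the exact finite geometric identities
\[
\frac{1}{w^{\alpha}-x}=-\lsum_{n=0}^{N_1-1}\frac{w^{\alpha n}}{x^{n+1}}+\frac{w^{\alpha N_1}}{x^{N_1}(w^{\alpha}-x)},\qquad
\frac{1}{w^{\beta}-y}=-\lsum_{m=0}^{N_2-1}\frac{w^{\beta m}}{y^{m+1}}+\frac{w^{\beta N_2}}{y^{N_2}(w^{\beta}-y)},
\]
with $N_1,N_2$ chosen (slightly beyond $r_\beta,r_\alpha$) in terms of $r$, expand the product inside $J$, and integrate $\gamma'$ term by term. The product of the two finite sums contributes, for each pair $(n,m)$, the quantity
\[
\frac{x^{-(n+1)}y^{-(m+1)}}{2\pi\ri}\lint_{\gamma'}\re^{w}\,w^{\alpha(n+1)+\beta(m+1)-\mu}\,\rd w
=\frac{x^{-(n+1)}\,y^{-(m+1)}}{\Gamma\bl(\mu-\alpha(n+1)-\beta(m+1)\br)}
\]
by Hankel's representation of $1/\Gamma$; reindexing yields precisely the double sum $\lsum_{n=1}^{r_\beta}\lsum_{m=1}^{r_\alpha}x^{-n}y^{-m}/\Gamma(\mu-n\alpha-m\beta)$ of~\eqref{as1}--\eqref{as4}, the finitely many extra terms being of smaller order and passing into the error.

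It then remains to estimate the three remainder contributions, namely a finite sum times a geometric remainder (two of these) and the product of the two remainders. The essential ingredient is a uniform lower bound $|w^{\alpha}-x|\ge c\,(|w^{\alpha}|+|x|)$ and $|w^{\beta}-y|\ge c\,(|w^{\beta}|+|y|)$ along $\gamma'$: on the circular arc it is immediate since $|w|$ stays bounded while $|x|,|y|\to\infty$, and on the rays it follows from the strictly positive angular gap between $\arg x$ (resp.\ $\arg y$), which is confined to the sector prescribed in that case, and $\arg(w^{\alpha})=\pm\eta/\beta$ (resp.\ $\arg(w^{\beta})=\pm\eta/\alpha$); this gap is secured by choosing $\eta$ admissibly in~\eqref{cond1} — strictly above $\tau$ when a point lies in an $\Omega^{(+)}$ region, strictly below $\beta|\arg x|$ or $\alpha|\arg y|$ when it lies in an $\Omega^{(-)}$ region. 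Combining these bounds with the exponential decay of $\re^{w}$ on the rays, every remainder integral converges absolutely and is $O(|x|^{-1})$, $O(|y|^{-1})$ or $O(|x|^{-1}|y|^{-1})$; multiplying by the prefactors $x^{-N_1}$, $y^{-N_2}$ then gives contributions of the orders $o\bl(|xy|^{-1}|x|^{-r_\alpha}\br)$ and $o\bl(|xy|^{-1}|y|^{-r_\beta}\br)$ recorded in~\eqref{as1}--\eqref{as4}. The limiting values $\alpha=2$ or $\beta=2$, and the boundary case $\tau=\min(\pi,\pi\alpha\beta)$, are disposed of by the same continuity arguments used for the integral representations. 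The hard part will be exactly this last block: proving the lower bounds on $|w^{\alpha}-x|$ and $|w^{\beta}-y|$ \emph{uniformly} over the closed sectors, in particular near their edges where $x$ or $y$ may drift toward a ray of $\gamma'$; by comparison, the term-by-term integration and the identification of the $\Gamma$-factors via Hankel's formula are routine.
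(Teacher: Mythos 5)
Your proposal follows essentially the same route as the paper's proof: for each angular regime you start from the matching representation of Lemmas~\ref{lem1}--\ref{lem3} (and Remark~\ref{rem1}), expand the denominator product into a finite double sum plus a remainder, identify the polynomial terms through Hankel's integral for $1/\Gamma$, and bound the remainder integrals by combining the exponential decay coming from $\cos\bigl(\theta/(\alpha\beta)\bigr)<0$ with a positive angular gap between $\arg x$, $\arg y$ and the contour rays. The substitution $\z=w^{\alpha\beta}$ and the use of two separate finite geometric expansions in place of the single identity~\eqref{exp} are only cosmetic variants of what the paper does (choosing $\theta$ with $\tau<\theta\le\min(\pi,\pi\alpha\beta)$ and using the lower bounds $\bigl|\z^{1/\beta}-x\bigr|\ge|x|\sin\bigl(\tfrac{\theta-\tau}{\beta}\bigr)$, $\bigl|\z^{1/\alpha}-y\bigr|\ge|y|\sin\bigl(\tfrac{\theta-\tau}{\alpha}\bigr)$), and the uniformity caveat you flag near the sector edges is likewise present, not resolved, in the paper's treatment of cases 2)--4).
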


\begin{proof}
The proof below focuses on the first case, since the proofs ot the three other cases are easily completed along the same lines as in case 1, that is as the proof of asymptotic formula~\eqref{as1}.

So, under the conditions required in case 1, i.e. $|\arg x|\le \tau/\beta$\ and $|\arg y|\le \tau/\alpha$, pick a real number $\theta$ satisfying the condition~\eqref{cond2}:
\begin{equation} \label{cond2}
\pi \alpha\beta/2 < \tau < \theta\le \min\bl(\pi, \pi\alpha\beta\br).
\end{equation}
It is easy to expand the equality
\begin{equation} \label{exp}
\frac{1}{(\z^{1/\beta} - x)(\z^{1/\alpha} - y)} = \lsum_{n=1}^{r_\beta} \lsum_{m=1}^{r_\alpha} %
\frac{ \z^{\frac{n - 1}{\beta} + \frac{m - 1}{\alpha}} }{x^n y^m} + %
\frac{ x^{r_\beta} \z^{ \frac{r_\alpha}{\alpha} } + y^{r_\alpha} \z^{ \frac{r_\beta}{\beta} } %
- \z^{ \frac{r_\alpha}{\alpha} + \frac{r_\beta}{\beta}} }{x^{r_\beta} y^{r_\alpha} %
(\z^{1/\beta} - x)(\z^{1/\alpha} - y) }\,.
\end{equation}

Set $\eps = 1$ in the representation~\eqref{mlrep4} in Lemma~\ref{lem3}. Then, to the right of the contour $\gamma(1;\theta)$ (i.e. within the complex region $\Omega^{(+)}(1;\theta)$), in view of expansion~\eqref{exp} and by Eq.~\eqref{mlrep4}, the integral representation of $E_{\alpha,\beta}(x,y; \mu)$ takes the form

\begin{flalign} \label{mlrep5}
E_{\alpha,\beta}(x,y; \mu) &= \frac{1}{\alpha}\, \frac{ \re^{x^{1/\alpha}} %
x^{\frac{1 + \beta - \mu}{\alpha}} }{x^{\beta/\alpha} - y} + \frac{1}{\beta}\, \frac{\re^{y^{1/\beta}} %
y^{\frac{1 + \alpha - \mu}{\beta}} } {y^{\alpha/\beta} - x}\nonumber\\
+& \lsum_{n=1}^{r_\beta} \lsum_{m=1}^{r_\alpha} \frac{1}{2\pi\ri} \frac{1}{\alpha\beta}\, %
\l(\lint_{\gamma(1;\theta)} \re^{\z^{1/(\alpha \beta)}} \z^{ \frac{\alpha + \beta + 1 - \mu}{\alpha \beta} - 1 + \frac{n - 1}{\beta} + \frac{m - 1}{\alpha} }\, \rd \z)\r)\, x^{-n} y^{-m}\nonumber\\
+& \frac{1}{2\pi\ri} \frac{1}{\alpha \beta} \lint_{\gamma(1;\theta)} \re^{ \z^{1/(\alpha \beta)} } %
\z^{ \frac{\alpha + \beta + 1 - \mu}{\alpha \beta} - 1 }\; \frac{ x^{r_\beta} \z^{\frac{r_\alpha}{\alpha}} %
+ y^{r_\alpha} \z^{\frac{r_\beta}{\beta}} - \z^{\frac{r_\alpha}{\alpha} + \frac{r_\beta}{\beta}} } %
{x^{r_\beta} y^{r_\alpha} (\z^{1/\beta} - x )(\z^{1/\alpha} - y) }\, \rd \z.
\end{flalign}
Now, the Hankel representation of the reciprocal gamma function is obtained through the suited Hankel contour $\cH$\ detailed in~\cite[Eq.~C3]{Lavault17}, \cite[Chap.~3, \S3.2.6]{Temme96}, etc., and written as the well-known contour integral formula
\[
\frac{1}{\Gamma(s)} = \lint_{\cH} \re^u u^{-s}\, \rd u\ \qquad (s\in \C,\ u > 0).\]
In the present setting, the integral contour is defined by $\cH_\theta = \gamma(1 ; \theta)$ ($\pi \alpha\beta/2 < \tau < \theta\le \min\bl(\pi, \pi \alpha \beta\br)$), according to the definition of the Hankel path $\gamma(\eps ; \eta)$ in Section~\ref{def} and the assumptions of the theorem.

As a consequence, the summand of the double sum, the second term in~\eqref{mlrep5}, satisfies the relation
\begin{flalign} \label{summ}
\frac{1}{2\pi\ri} & \frac{1}{\alpha\beta}\, \lint_{\gamma(1;\theta)} \re^{\z^{1/(\alpha \beta)}} %
\z^{ \frac{\alpha + \beta + 1 - \mu}{\alpha \beta} - 1 + \frac{n - 1}{\beta} + \frac{m - 1}{\alpha} }\, %
\rd \z = \frac{1}{2\pi\ri} \frac{1}{\alpha\beta}\, \lint_{\gamma(1;\theta)} \re^{\z^{1/(\alpha \beta)}} %
\z^{ \frac{1- \mu}{\alpha \beta} - 1 + \frac{n}{\beta} + \frac{m}{\alpha} }\, \rd \z\nonumber\\
&= \frac{1}{2\pi\ri} \frac{1}{\alpha\beta}\, \lint_{\gamma(1;\theta)} \re^{\z^{1/(\alpha \beta)}} %
\z^{ -\frac{1}{\alpha \beta}\bl(\mu - n \alpha - m \beta\br) + \frac{1}{\alpha \beta} - 1 }\, \rd \z %
= \frac{1}{\Gamma(\mu - n \alpha - m \beta)}\,.
\end{flalign}
Therefore, under the constraints resulting from inequalities~\eqref{cond2}, substituting the above summand~\eqref{summ} into representation~\eqref{mlrep5} yields the transformation
\begin{flalign} \label{mlrep6}
E_{\alpha,\beta}(x,y; \mu) &= \frac{1}{\alpha}\, \frac{ \re^{x^{1/\alpha}} %
x^{\frac{1 + \beta - \mu}{\alpha}} }{x^{\beta/\alpha} - y} + \frac{1}{\beta}\, \frac{\re^{y^{1/\beta}} %
y^{\frac{1 + \alpha - \mu}{\beta}} } {y^{\alpha/\beta} - x} + \lsum_{n=1}^{r_\beta} %
\lsum_{m=1}^{r_\alpha} \frac{x^{-n} y^{-m}}{\Gamma(\mu - \alpha n - \beta m)}\nonumber\\
+ \frac{1}{2\pi\ri} \frac{1}{\alpha \beta} & \lint_{\gamma(1;\theta)} %
\re^{ \z^{1/(\alpha \beta)} } \z^{ \frac{\alpha + \beta + 1 - \mu}{\alpha \beta} - 1 }\; %
\frac{ x^{r_\beta} \z^{r_\alpha/\alpha} + y^{r_\alpha} \z^{r_\beta/\beta} - %
\z^{ r_\alpha/\alpha + r_\beta/\beta } } %
{x^{r_\beta} y^{r_\alpha} (\z^{1/\beta} - x)(\z^{1/\alpha} - y) }\, \rd \z.
\end{flalign}
Next, expanding and simplifying the final term above makes the obvious sum of the three integrals $I_1 + I_2 + I_3$\ appear in Eq.~\eqref{mlrep6}; precisely,
\begin{flalign}
I_1 &= \frac{1}{2\pi\ri} \frac{1}{\alpha \beta}\, \lint_{\gamma(1;\theta)}\, %
\frac{ \re^{ \z^{1/(\alpha \beta)} } \z^{ \frac{\alpha + \beta + 1 - \mu}{\alpha \beta} - 1 %
+ r_\alpha/\alpha } } {y^{r_\alpha} (\z^{1/\beta} - x)(\z^{1/\alpha} - y)}\, \rd \z, \label{int1}\\
I_2 &= \frac{1}{2\pi\ri} \frac{1}{\alpha \beta}\, \lint_{\gamma(1;\theta)} %
\frac{ \re^{ \z^{1/(\alpha \beta)} } \z^{ \frac{\alpha + \beta + 1 - \mu}{\alpha \beta} - 1 %
+ r_\beta/\beta } } {x^{r_\beta} (\z^{1/\beta} - x)(\z^{1/\alpha} - y)}\, \rd \z\ %
\quad \tand \label{int2}\\
I_3 &= - \frac{1}{2\pi\ri} \frac{1}{\alpha \beta}\, \lint_{\gamma(1;\theta)}\, %
\frac{ \re^{ \z^{1/(\alpha \beta)} } \z^{ \frac{\alpha + \beta + 1 - \mu}{\alpha \beta} - 1 + %
r_\alpha/\alpha + r_\beta/\beta } } {x^{r_\beta} y^{r_\alpha}(\z^{1/\beta} - x)(\z^{1/\alpha} - y)}\, %
\rd \z. \label{int3}
\end{flalign}

Assuming that $|\arg x|\le \tau/\beta$\ and $|\arg y|\le \tau/\alpha$, each of the three integrals in the sum~\eqref{mlrep6}, $I_1$\ in~\eqref{int1}, $I_2$\ in~\eqref{int2} and $I_3$\ in~\eqref{int3}, can be evaluated for large values of $|x|$\ and $|y|$. Provided that $|\arg x|\le \tau/\beta$\ for $|x|$\ large enough, it can be checked that
\begin{equation*}
\min_{\z\in \gamma(1;\theta)} \l|\z^{1/\beta} - x\r| = |x| \sin(\theta/\beta - \tau/\beta) %
= |x| \sin\l(\tfrac{\theta - \tau}{\beta}\r)
\end{equation*}
and analogously, when $|\arg y|\le \tau/\beta$ for $|y|$ large enough,
\begin{equation*}
\min_{\z\in \gamma(1;\theta)} \l|\z^{1/\alpha} - y\r| = |y| \sin(\theta/\alpha - \tau/\alpha) %
= |y| \sin\l(\tfrac{\theta - \tau}{\alpha}\r).
\end{equation*}
Hence, when $|\arg x|\le \tau/\beta$\ and $|\arg y|\le \tau/\alpha$ for large $|x|$\ and $|y|$, the following estimate for the integral $I_1$ can be obtained:
\begin{equation} \label{estimateI1}
|I_1|\le \frac{ |x|^{-1} |y|^{-r_\alpha -1} }{ 2\pi \alpha \beta %
\sin\l(\frac{\theta - \tau}{\alpha}\r) \sin\l(\frac{\theta - \tau}{\beta}\r) }\, %
\lint_{\gamma(1;\theta)} \l|\re^{\z^{\frac{1}{\alpha \beta}} }\r| \l|\z^{ \frac{\alpha + \beta + 1 - \mu}{\alpha \beta} - 1 + \frac{r_\alpha}{\alpha} }\r|\, |\rd \z|.
\end{equation}
Of course, an analogous estimate holds also symmetrically for the integral $I_2$ by substituting $r_\beta$ for $r_\alpha$ and $r_\beta/\beta$ for $r_\alpha/\alpha$ (resp.) into inequality~\eqref{estimateI1}.

Besides, since the rays defined by $S_{\theta} = \bl\{\arg \z = \pm \theta,\ |\z|\ge 1\br\}$ belong to the contour $\gamma(1;\theta)$, the integral in inequality~\eqref{estimateI1} is convergent; whence the equality
\[
\dis \l|\re^{ \z^{1/(\alpha \beta)} }\r| = \exp\l(|\z|^{\frac{1}{\alpha \beta}} %
\cos \frac{\theta}{\alpha \beta} \r).\]
Now, according to inequalities~\eqref{cond2}, we have that $\cos \frac{\theta}{\alpha \beta} < 0$. Thus,
\[
I_1 = o\l(|xy|^{-1} |y|^{-r_\alpha}\r)\ \qquad \tand \qquad I_2 = o\l(|xy|^{-1} |x|^{-r_\beta}\r).\]

Furthermore, by referring to Eq.~\eqref{int3}, the next estimate is also obtained for the integral $I_3$.
\begin{equation} \label{estimateI3}
\kern-.7cm |I_3|\le \frac{ |x|^{-r_\beta - 1} |y|^{-r_\alpha -1} }{ 2\pi \alpha\beta %
\sin\l(\frac{\theta - \tau}{\alpha}\r) \sin\l(\tfrac{\theta - \tau}{\beta}\r) }\, %
\lint_{\gamma(1;\theta)} \l|\re^{ \z^{1/(\alpha \beta)} }\r| \l|\z^{ \frac{\alpha + \beta + 1 - \mu} %
{\alpha \beta} - 1 + r_\alpha/\alpha + r_\beta/\beta }\r|\, |\rd \z|,
\end{equation}
which yields the asymptotic formula $I_3 = o\l(|xy|^{-1} |x|^{-r_\beta} |y|^{-r_\alpha}\r)$.

Hence, this leads finally to the overall asymptotic formula
\[
I_1 + I_2 + I_3 = o\l(|xy|^{-1} |x|^{-r_\beta}\r) + o\l(|xy|^{-1} |y|^{-r_\alpha}\r)\]
and the proof of Eq.~\eqref{as1} (in case 1 of Theorem~\ref{main}) is established.

Similarly, the proofs of Eq.~\eqref{as2} (case 2), Eq.~\eqref{as3} (case 3) and Eq.~\eqref{as4} (case 4) run along the same lines as the above proof of Eq.~\eqref{as1} (case 1). This completes the proof of Theorem~\ref{main}.
\end{proof}

\textbf{Acknowledgments.} The author would like to thank an anonymous reviewer for his/her very careful reading and valuable suggestions. His/her remarks benefited to a great help in correcting several mistakes and improved the quality of the paper.

\bibliographystyle{amsplain}

\end{document}